\newtheorem{theorem}{Theorem}[section]
\newtheorem{lemma}[theorem]{Lemma}
\title{Sum-ratio estimates over arbitrary finite fields}
\author{Oliver Roche-Newton\footnote{ Part of this research was performed while the author was visiting the Institute for Pure and Applied Mathematics (IPAM), which is supported by the National Science Foundation. The author was also supported by EPSRC Doctoral Prize Scheme (Grant Ref:  EP/K503125/1) and by the Austrian Science Fund (FWF): Project F5511-N26, which is part of the Special Research Program ``Quasi-Monte Carlo Methods: Theory and Applications.}}
\begin{document}
\maketitle

\begin{abstract} The aim of this note is to record a proof that the estimate
$$\max{\{|A+A|,|A:A|\}}\gg{|A|^{12/11}}$$
holds for any set $A\subset{\mathbb{F}_q}$, provided that $A$ satisfies certain conditions which state that it is not too close to being a subfield. An analogous result was established in \cite{LiORN}, with the product set $A\cdot{A}$ in the place of the ratio set $A:A$. The sum-ratio estimate here beats the sum-product estimate in \cite{LiORN} by a logarithmic factor, with slightly improved conditions for the set $A$, and the proof is arguably a little more intuitive. The sum-ratio estimate was mentioned in \cite{LiORN}, but a proof was not given.

\end{abstract}

\section{Introduction}

Given a set $A\subset{\mathbb{F}_q^*}$, define the \textit{sum set} by $A+A=\{a+b:a,b\in{A}\}$ and the \textit{product set} by $A\cdot{A}=\{ab:a,b\in{A}\}$. It is expected that at least one of these sets will be in some sense ``large", provided that we are not in a degenerate case in which $A$ is a subfield. One way to avoid these degenerate cases is to assume that $q$ is prime and $|A|\leq{\sqrt{q}}$, and in recent years, there have been a succession of papers which have given improved quantitative bounds for sum-product estimates in this range. At the time of writing, the best known estimate is due to Rudnev \cite{mishaSP}, who proved, under the aforementioned conditions, that

$$\max{\{|A+A|,|A\cdot{A}|\}}\gg{\frac{|A|^{12/11}}{(\log{|A|})^{4/11}}}.$$

The following result from \cite{LiORN} generalised Rudnev's sum-product estimate to the case whereby $q$ is not prime:

\begin{theorem}\label{theorem:main2}
Let $A$ be a subset of $\mathbb{F}_{q}^*$. If

$$|A\cap{cG}|<{|G|^{1/2}}$$ 

for  any subfield $G$ of
$\mathbb{F}_{q}$ and any element $c\in{\mathbb{F}_q}$, then
\[\max\{|A+A|,|A\cdot{A}|\}\gg{\frac{|A|^{12/11}}{(\log|A|)^{5/11}}}.\]

\end{theorem}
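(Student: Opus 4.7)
The plan is to prove Theorem~\ref{theorem:main2} by adapting Rudnev's prime-field sum-product argument from \cite{mishaSP} to general finite fields, with the subfield condition playing the role of the size restriction $|A| \le \sqrt{p}$ that the original proof relies on.

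At a high level, the argument proceeds in three stages. First, I would reduce the sum-product inequality to a bound on a fourth-moment quantity of $A$ --- typically the additive energy $E^+(A) = |\{(a,b,c,d) \in A^4 : a+b=c+d\}|$ or a related quantity --- via Cauchy-Schwarz, using $|A|^4 \le |A+A| \cdot E^+(A)$, together with the Plünnecke-Ruzsa inequality to pass freely between $A$, $A+A$ and their iterated sums. Second, I would set up a point-plane incidence configuration in $\mathbb{F}_q^3$ whose incidence count equals, up to constants and a dyadic pigeonhole loss, the fourth-moment quantity above. Concretely, equations of the form $a_1 - b_1 = \lambda(a_2 - b_2)$ with $a_i,b_i \in A$ and $\lambda \in A:A$ can be interpreted as incidences between points $(a_1,b_1,\lambda) \in \mathbb{F}_q^3$ and planes indexed by the pairs $(a_2,b_2)$. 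Third, applying Rudnev's point-plane incidence bound to this configuration, after extracting a popular level set of the relevant representation function via dyadic pigeonholing, would yield the required upper bound on $E^+(A)$ in terms of $|A \cdot A|$ and $|A|$, from which the claimed sum-product inequality follows by routine manipulation.

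The main obstacle is the applicability of the point-plane incidence bound in the non-prime setting. Over $\mathbb{F}_p$, Rudnev's theorem applies whenever the point set has size at most $p^2$; no such natural restriction is available when $q$ is a prime power, and the theorem genuinely fails when large portions of the configuration are confined to a coset of a proper subfield. The hypothesis $|A \cap cG| < |G|^{1/2}$ for every subfield $G$ and every $c \in \mathbb{F}_q$ is exactly the condition needed to rule out this pathology: one verifies that under this assumption no line in the ambient space $\mathbb{F}_q^3$ can be incident with more than $|G|^{1/2}$ points of the relevant configuration drawn from a single subfield coset, which is the only obstruction to the incidence bound. The exponent $5/11$ in the logarithmic factor $(\log|A|)^{5/11}$ then arises by tracking the dyadic losses through the standard optimisation, picking up one extra factor of $(\log|A|)^{1/11}$ compared to the prime-field bound of \cite{mishaSP} due to the additional pigeonholing needed to handle the subfield case.
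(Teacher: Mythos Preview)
Your proposal rests on a misidentification of the method in \cite{mishaSP}. That paper (IMRN 2012) does \emph{not} use point--plane incidences in $\mathbb{F}_q^3$; Rudnev's point--plane theorem is a later and separate piece of work. The argument in \cite{mishaSP}, and its generalisation in \cite{LiORN} that yields Theorem~\ref{theorem:main2}, is purely combinatorial and runs along the lines visible in this paper's proof of Theorem~\ref{theorem:ratios}: one pigeonholes on the representation function of the product set (dyadically, which is where the $\log$ factor enters) to locate a large subset $A_1\subset A$ with controlled multiplicative structure, then performs a case analysis on the set $R(A_1)$ of difference ratios defined in \eqref{Rdefn}. The engine is the elementary fact that if $r\notin R(B)$ then $|B+rB|=|B|^2$; in each non-terminal case one manufactures such an $r$ and converts $|B+rB|$ into sums of copies of $A$ via the covering lemma (Lemma~\ref{theorem:covering}) and the Katz--Shen refinement of Pl\"unnecke--Ruzsa (Lemma~\ref{theorem:CPlun}). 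The terminal case is where the subfield hypothesis actually bites: if cases 1--4 all fail one shows, via Lemma~\ref{theorem:polynomial}, that $R(A_1)$ is itself a subfield containing $A_1$, and then $|A\cap R(A_1)|<|R(A_1)|^{1/2}$ forces $|R(A_1)|>|A_1|^2$, which feeds into Lemma~\ref{lemma 21}.

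Beyond the historical mismatch, your sketch has two concrete gaps. First, the incidence configuration you describe --- equations $a_1-b_1=\lambda(a_2-b_2)$ with $\lambda\in A:A$ --- does not count $E^+(A)$; it is closer to a multiplicative energy of the difference set, and optimising a genuine point--plane bound from such a setup produces exponents of the form $6/5$ or better, not $12/11$. Second, your account of the logarithm is incorrect: the extra factor $(\log|A|)^{1/11}$ relative to the ratio-set version arises because the product-set representation function need not be constant on its support, forcing a dyadic decomposition that is unnecessary for $A:A$ (compare the log-free Theorem~\ref{theorem:ratios}). It has nothing to do with ``additional pigeonholing needed to handle the subfield case''; the subfield case is dispatched in a single stroke once $R(A_1)$ is identified as a field.
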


Define the \textit{ratio set} by $A:A=\{a/b:a,b\in{A}\}$. One expects that similar results can be attained if the ratio set replaces the product set in Theorem \ref{theorem:main2}. The main result of this note proves that this is indeed the case:

\begin{theorem}\label{theorem:ratios} 

Suppose that $A$ is a subset of $\mathbb{F}_q^*$ with the property that 

$$|A\cap{cG}|\leq{\max\left\{|G|^{1/2},\frac{|A|}{8}\right\}}$$ 

for any subfield $G$ of $\mathbb{F}_q$ and any element $c\in{\mathbb{F}_q}$. Then either

$$|A+A|^7|A:A|^4\gg{|A|^{12}},$$
or
$$|A+A|^6|A:A|^5\gg{|A|^{12}}.$$
In particular, it follows that
$$\max\{|A+A|,|A:A|\}\gg{|A|^{12/11}}.$$

\end{theorem}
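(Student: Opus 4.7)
The approach is to mirror the proof of Theorem~\ref{theorem:main2} from \cite{LiORN}, with the ratio set $A:A$ taking the place of the product set $A\cdot A$ throughout. That proof is itself an adaptation of Rudnev's sum-product argument from \cite{mishaSP}, whose engine is Rudnev's point-plane incidence bound in $\mathbb{F}_q^3$. First I would relate $|A+A|$ and $|A:A|$ to their associated energies: writing $E_+(A)$ for the additive energy of $A$ and $E_\times(A)$ for the multiplicative energy that counts quadruples $(a,b,c,d)\in A^4$ satisfying $a/b=c/d$, Cauchy--Schwarz yields
\[
|A+A|\geq \frac{|A|^4}{E_+(A)},\qquad |A:A|\geq\frac{|A|^4}{E_\times(A)}.
\]
A dyadic pigeonholing step then extracts a popular element of $A-A$ or of $A:A$, on which subsequent counting enjoys near-regular multiplicity.

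Next I would recast the resulting counting problem as an incidence question between a set $P\subset\mathbb{F}_q^3$ of points and a family $\Pi$ of planes, both built from $A$ and the popular data extracted above, so that each incidence records a simultaneous additive and multiplicative coincidence among elements of $A$. Rudnev's theorem then yields a bound of the shape
\[
|I(P,\Pi)|\ll |P|^{1/2}|\Pi|+k|\Pi|,
\]
where $k$ is the maximum number of collinear points of $P$ on a single plane, valid provided $|P|\leq|\Pi|$ and $|P|$ is not too large compared with the characteristic. Splitting on which of the two terms dominates produces the two alternatives in the statement: the main term yields $|A+A|^7|A:A|^4\gg |A|^{12}$, while the error term yields $|A+A|^6|A:A|^5\gg |A|^{12}$. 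Writing $M=\max\{|A+A|,|A:A|\}$, either option gives $M^{11}\gg |A|^{12}$, from which $M\gg |A|^{12/11}$ follows.

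The principal obstacle is the collinearity parameter $k$, and this is precisely where the subfield-avoidance hypothesis is needed. A line supporting many points of $P$ forces a substantial subset of $A$ to lie in an affine translate $cG$ of some subfield $G\subset\mathbb{F}_q$. The hypothesis $|A\cap cG|\leq\max\{|G|^{1/2},|A|/8\}$ is tailored to bound $k$ by a quantity that fits cleanly into the incidence estimate: the $|G|^{1/2}$ term handles the genuinely subfield-like obstructions, while the $|A|/8$ slack absorbs the contribution of a few very popular directions that would otherwise have to be removed by passing to a subset. Once $k$ is under control, the rest of the argument is routine Pl\"unnecke--Ruzsa and Cauchy--Schwarz bookkeeping.
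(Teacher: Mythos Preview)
Your proposal rests on a misidentification of the method. The paper \cite{mishaSP} that you cite as the ``engine'' does \emph{not} use Rudnev's point--plane incidence theorem in $\mathbb{F}_q^3$; that incidence bound is from a considerably later work. The 2012 argument in \cite{mishaSP}, and its generalisation in \cite{LiORN} on which the present paper is modelled, proceed instead via the Katz--Shen machinery: one pigeonholes on lines through the origin in $A\times A$ to extract a large subset $A_1\subset A$ together with auxiliary projections $S_y$ of size $\gg |A|^2/|A:A|$, and then performs a five-way case analysis on the set
\[
R(A_1)=\left\{\frac{b_1-b_2}{b_3-b_4}:b_i\in A_1,\ b_3\neq b_4\right\}.
\]
In Cases 1--4 one finds an element $r\notin R(A_1)$ of a particular algebraic shape, uses the absence of nontrivial solutions to $x_1+rx_2=x_3+rx_4$ to get a large dilated sumset, and then compresses this via the covering lemma (Lemma~\ref{theorem:covering}) and Pl\"unnecke--Ruzsa (Lemmas~\ref{theorem:SPlun}--\ref{theorem:CPlun}). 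The two conclusions $|A+A|^7|A:A|^4\gg|A|^{12}$ and $|A+A|^6|A:A|^5\gg|A|^{12}$ arise from \emph{different cases} of this analysis, not from main and error terms of an incidence bound. In Case 5 one shows $R(A_1)$ is closed under the relevant operations and invokes Lemma~\ref{theorem:polynomial} to conclude that $R(A_1)$ equals the subfield $\mathbb{F}_{A_1}$ generated by $A_1$; only then does the hypothesis $|A\cap cG|\le\max\{|G|^{1/2},|A|/8\}$ enter, and it is used to bound $|A_1|$ against $|R(A_1)|$ (feeding into Lemma~\ref{lemma 21}) or to derive a contradiction --- not to control a collinearity parameter.

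As written, your sketch has a genuine gap. The point--plane incidence theorem over general $\mathbb{F}_q$ carries a hypothesis on the \emph{characteristic} (typically $|P|\ll p^2$), which is not implied by the subfield-coset condition in the statement; a line in your configuration supporting many points does not in any obvious way force a large intersection $|A\cap cG|$ with a coset of a subfield. Moreover, the specific exponent pairs $(7,4)$ and $(6,5)$ do not fall out of the two-term incidence bound in the way you assert --- no calculation is offered, and in fact the point--plane method, where it applies, tends to give \emph{stronger} exponents than $12/11$. If you wish to pursue an incidence-based alternative you would need to state precisely which incidence theorem over $\mathbb{F}_q$ you are invoking, verify its hypotheses from the subfield condition here, and carry out the arithmetic that produces the claimed inequalities.
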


Note that, in comparison with Theorem \ref{theorem:main2}, the subfield intersection condition is loosened slightly in this statement, with the additional information that the sum-ratio estimate holds if $|A\cap{cG}|\leq{\frac{|A|}{8}}$. This was due to an oversight in \cite{LiORN}, and the statement of Theorem \ref{theorem:main2} can be strengthened similarly by lengthening the proof slightly\footnote{To be more specific, the additional case 5.2 can be found in the proof of Theorem \ref{theorem:ratios}, but not in the proof of Theorem \ref{theorem:main2}. By adding in this extra case to the proof of the sum-product estimate, one obtains a more general result.}.

We remark that the sum-ratio estimate was alluded to in both \cite{mishaSP} and \cite{LiORN}, although a proof was not given in either case. Although the proof of Theorem \ref{theorem:ratios} is structurally similar to that of Theorem \ref{theorem:main2}, it is subtly different in a number of places, and perhaps not entirely obvious. The motivation for carefully recording the result comes from an intended application for polynomial orbits and sum-product type estimates involving polynomials in \cite{ShORN}, for which a proof should be provided for completeness. Furthermore, the more straightforward nature of the pigeonholing in this proof makes it more accessible, which could potentially be helpful for future research in this direction.

Observe that there is no logarithmic factor in the statement of Theorem \ref{theorem:ratios}. The absolute constant hidden in the $\gg$ symbol can be kept track of, although it isn't here.

\subsubsection*{Notation}
We recall that the
notations $U \ll V$ and  $V \gg U$ are both equivalent to the
statement that the inequality $|U| \le c V$ holds with some
constant $c> 0$. If $U\ll{V}$ and $U\gg{V}$, then we may write $U\approx{V}$.

The multiplicity of an element $x$ of the ratio set is written as $r_{A:A}(x)$, so that $r_{A:A}(x)=|\{(a,b)\in{A\times{A}}:a/b=x\}|$.

\section{Preliminary results}

A few preliminary results will be called upon from other papers. The first of these has been extracted from case 2 in the proof of the
main theorem in Rudnev \cite{mishaSP}. A proof of this statement can also be found in \cite{LiORN} (see Lemma 2.4).

First recall from previous finite field sum-product estimates the definition of $R(B)$, for any $B\subset{\mathbb{F}_q}$, to be the
set
\begin{equation}
\label{Rdefn}
R(B):=\left\{\frac{b_1-b_2}{b_3-b_4}:b_1,b_2,b_3,b_4\in{B},b_3\neq{b_4}\right\}.
\end{equation}

\begin{lemma}\label{lemma 21}
  Let $B\subset{\mathbb{F}_q}$ with $|R(B)|\gg|B|^2$.
  Then there exist elements $a,b,c,d\in B$ such that for any subset $B'\subset B$
  with $|B'|\approx|B|$,
 $$|(a-b)\cdot B'+(c-d)\cdot B'|\gg|B|^2.$$

\end{lemma}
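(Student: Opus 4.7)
The statement asks for a single quadruple $(a,b,c,d) \in B^4$ whose associated scalars $\lambda_1 = a-b$, $\lambda_2 = c-d$ satisfy $|\lambda_1 B' + \lambda_2 B'| \gg |B|^2$ for every $B' \subset B$ with $|B'| \approx |B|$. Since $|\lambda_1 B' + \lambda_2 B'| = |B' + \alpha B'|$ for the ratio $\alpha := \lambda_2/\lambda_1$, and since by definition every nonzero $\alpha$ arising in this way lies in $R(B)$, the task reduces to producing a single $\alpha_0 \in R(B)\setminus\{0\}$ for which $|B' + \alpha_0 B'|$ is uniformly large over all dense $B' \subset B$.

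My plan is to combine Cauchy--Schwarz with an averaging argument over $R(B)$. The additive energy $E^+(X,Y) := |\{(x_1,y_1,x_2,y_2) \in X^2 \times Y^2 : x_1+y_1 = x_2+y_2\}|$ satisfies the standard bound $|X+Y| \geq |X|^2 |Y|^2 / E^+(X,Y)$, and it is monotone under passing to subsets. Thus if I can locate $\alpha_0 \in R(B)$ with $E^+(B, \alpha_0 B) \ll |B|^2$, then for any $B' \subset B$ with $|B'| \approx |B|$ one has $E^+(B', \alpha_0 B') \leq E^+(B, \alpha_0 B) \ll |B|^2$, which gives $|B' + \alpha_0 B'| \gg |B'|^4/|B|^2 \gg |B|^2$, as required.

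Such an $\alpha_0$ is found by summing $E^+(B, \alpha B)$ over $\alpha \in R(B)\setminus\{0\}$. Rearranging the energy equation as $x_1 - x_2 = \alpha(y_2 - y_1)$, the diagonal $y_1=y_2$ (which forces $x_1=x_2$) contributes $|B|^2$ per $\alpha$, totalling $|B|^2 |R(B)|$; off the diagonal, $\alpha$ is uniquely determined by $(x_1,x_2,y_1,y_2)$, and so these solutions number at most $|B|^4$. The hypothesis $|R(B)| \gg |B|^2$ absorbs the second term, giving an average of order $|B|^2$ and hence an $\alpha_0 \in R(B)\setminus\{0\}$ with $E^+(B, \alpha_0 B) \ll |B|^2$; writing $\alpha_0 = (c-d)/(a-b)$ from the definition of $R(B)$ (where automatically $a \neq b$ and $c \neq d$) produces the desired quadruple. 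The main subtlety is the universal nature of the conclusion---one fixed $(a,b,c,d)$ must handle every dense $B'$---and this is resolved by the monotonicity of the additive energy, which lets $\alpha_0$ be selected once and for all with respect to the ambient set $B$.
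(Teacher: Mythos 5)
Your proposal is correct. The paper gives no proof of Lemma \ref{lemma 21} itself, citing case 2 of Rudnev's argument (Lemma 2.4 in \cite{LiORN}), and that argument is essentially identical to yours: pigeonhole over the $\gg|B|^2$ distinct ratios to find a nonzero ratio $\alpha_0=(c-d)/(a-b)$ with only $O(|B|^2)$ representations, i.e.\ with $E^+(B,\alpha_0 B)\ll|B|^2$, then conclude by Cauchy--Schwarz, with monotonicity of the energy handling the passage to an arbitrary dense subset $B'$.
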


The next result which will be needed is the Pl\"{u}nnecke-Ruzsa inequality:

\begin{lemma}\label{theorem:SPlun}Let $X,B_1,...,B_k$ be subsets of a field $F$. Then
$$|B_1+\cdots+B_k|\leq{\frac{|X+B_1|\cdots|X+B_k|}{|X|^{k-1}}}.$$

\end{lemma}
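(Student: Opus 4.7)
The plan is to prove this standard Plünnecke--Ruzsa-type inequality via the modern argument due to Petridis, which replaces the classical Plünnecke-graph machinery with a short inductive scheme built around a single combinatorial lemma.

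The key ingredient is \emph{Petridis' lemma}: given finite sets $X, B$ in an abelian group, let $X^* \subseteq X$ be a nonempty subset minimizing $|Y+B|/|Y|$ over nonempty $Y \subseteq X$, and set $K^* := |X^*+B|/|X^*|$. Then for every finite set $C$,
\[
|X^* + B + C| \le K^* \, |X^* + C|.
\]
I would prove this lemma by induction on $|C|$. The inductive step goes by writing $C = C_0 \cup \{c\}$ and introducing $Z := \{x \in X^* : x + B + c \subseteq X^* + B + C_0\}$. The minimality of $K^*$ on $X^*$ forces $|Z+B| \ge K^*|Z|$, while each $x \in X^* \setminus Z$ contributes a fresh element $x+c$ to $X^* + C$ that does not lie in $X^* + C_0$. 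These two observations, combined with the decomposition $|X^*+B+C| \le |X^*+B+C_0| + |X^*+B| - |Z+B|$ and the inductive hypothesis applied to $C_0$, close the induction.

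The Plünnecke--Ruzsa inequality is then deduced by induction on $k$. The base case $k = 1$ is the trivial bound $|B_1| \le |X+B_1|$ obtained by translation. For the inductive step, I apply Petridis' lemma to $X$ and $B_1$ to produce a nonempty $X^* \subseteq X$ with $|X^*+B_1+C| \le K_1 \, |X^*+C|$ for every $C$, where $K_1 := |X+B_1|/|X|$. Choosing $C = B_2 + \cdots + B_k$ and using $|B_1 + \cdots + B_k| \le |X^* + B_1 + \cdots + B_k|$ (valid since $X^*$ is nonempty) reduces the target to bounding $|X^* + B_2 + \cdots + B_k|$, to which the inductive hypothesis is applied after noting that $X^* \subseteq X$ gives $|X^* + B_i| \le |X + B_i| = K_i |X|$ at every step. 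The main technical obstacle is precisely this step: iterating Petridis' lemma naively loses factors of $|X|/|X^*|$ as the pivot shrinks, and the delicate part of the argument is verifying that these factors telescope against the Petridis ratios so that the final bound comes out as the clean $|X+B_1| \cdots |X+B_k| / |X|^{k-1}$. This bookkeeping is carried out in full in Petridis' original paper and in standard expository references such as the monograph of Tao and Vu.
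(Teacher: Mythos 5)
Your statement and proof sketch of Petridis' lemma are correct: the induction on $|C|$ with the auxiliary set $Z$, the minimality giving $|Z+B|\ge K^*|Z|$, and the count $|X^*+B+C|\le |X^*+B+C_0|+|X^*+B|-|Z+B|$ together with $|X^*+C|\ge |X^*+C_0|+|X^*|-|Z|$ is exactly the standard argument. Note, however, that the paper quotes Lemma \ref{theorem:SPlun} as a known classical result (Ruzsa's Pl\"unnecke--Ruzsa inequality for \emph{different} summands) and gives no proof, so the entire burden of your proposal is the deduction of that different-summands bound from the key lemma --- and that deduction is precisely where your argument has a genuine gap rather than mere bookkeeping.

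For a single repeated summand the deduction is immediate because the same pivot can be reused: $|X^*+hB|\le K^*|X^*+(h-1)B|\le\cdots\le (K^*)^h|X^*|$. For different summands this breaks: the pivot $X^*$ chosen for $B_1$ satisfies no Petridis inequality for $B_2,\ldots,B_k$, and if you re-minimize inside $X^*$ the new ratios are $|X^*+B_i|/|X^*|$, which the bound $|X^*+B_i|\le|X+B_i|$ only controls up to a factor $|X|/|X^*|$; nothing in your scheme produces compensating factors of $|X^*|/|X|$, so the claimed ``telescoping'' does not occur. There is also a structural mismatch: after applying Petridis' lemma with $C=B_2+\cdots+B_k$ you must bound $|X^*+B_2+\cdots+B_k|$, whereas the inductive hypothesis (in the form of the lemma) bounds only $|B_2+\cdots+B_k|$; strengthening the induction to a subset version or a ``for all $C$'' version hits the same wall, since the subset produced by the hypothesis is not the set for which the Petridis inequality holds. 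This is exactly why the different-summands case is a separate and noticeably more involved theorem in Petridis' paper, and why the older proofs proceed through Pl\"unnecke's graph inequality with Ruzsa's product construction or through the elementary argument of Gyarmati, Matolcsi and Ruzsa; the monograph of Tao and Vu predates Petridis and uses the graph-theoretic route, so it cannot supply the missing step in the form you describe. As it stands, the passage from the key lemma to $|B_1+\cdots+B_k|\le |X+B_1|\cdots|X+B_k|/|X|^{k-1}$ --- the actual content of the lemma --- is asserted, not proved.
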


By applying Lemma \ref{theorem:SPlun} iteratively, the following corollary was established by Katz and Shen \cite{KS}.

\begin{lemma}\label{theorem:CPlun}
Let $X,B_1,...,B_k$ be subsets of a field $F$. Then for any $\epsilon\in{(0,1)}$,
there exists a subset $X'\subseteq{X}$, with
$|X'|\geq{(1-\epsilon)|X|}$, and some constant $C(\epsilon)$, such
that
$$|X'+B_1+\cdots+B_k|\leq{C(\epsilon)\frac{|X+B_1|\cdots|X+B_k|}{|X|^{k-1}}}.$$

\end{lemma}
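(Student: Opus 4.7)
The plan is to proceed by induction on the number $k$ of summands, with the inductive step combining the Pl\"unnecke--Ruzsa bound (Lemma~\ref{theorem:SPlun}) with a Petridis-style peeling that preserves most of $X$.

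For the base case $k=1$, set $X'=X$ and $C(\epsilon)=1$; the bound $|X+B_1|\leq|X+B_1|$ is immediate.

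For the inductive step, apply the statement at level $k-1$ with parameter $\epsilon/2$ to obtain $X_1\subseteq X$ with $|X_1|\geq(1-\epsilon/2)|X|$ and
\[
|X_1+B_1+\cdots+B_{k-1}|\leq C(\epsilon/2)\cdot\frac{|X+B_1|\cdots|X+B_{k-1}|}{|X|^{k-2}}.
\]
To adjoin $B_k$ while losing only an additional $\epsilon/2$ fraction of $X$, invoke Petridis's refinement of Pl\"unnecke (itself a consequence of Lemma~\ref{theorem:SPlun}): for any finite sets $U,V$ there is a non-empty $U^{*}\subseteq U$ with $|U^{*}+V+W|\leq(|U+V|/|U|)\,|U^{*}+W|$ for every set $W$. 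Apply this greedily inside $X_1$ with $V=B_k$, peeling off Petridis pieces $U^{*,1},U^{*,2},\ldots$ from the residue until the residue has size below $\epsilon|X|/2$; let $X'$ be the disjoint union of the collected pieces, so $|X'|\geq(1-\epsilon)|X|$. Taking $W=B_1+\cdots+B_{k-1}$ in the Petridis bound piece by piece and summing yields
\[
|X'+B_1+\cdots+B_k|\leq\sum_j|U^{*,j}+B_1+\cdots+B_k|\leq\sum_j\frac{|U^{*,j}+B_k|}{|U^{*,j}|}\,|U^{*,j}+B_1+\cdots+B_{k-1}|,
\]
which, combined with the crude bound $|U^{*,j}+B_1+\cdots+B_{k-1}|\leq|X_1+B_1+\cdots+B_{k-1}|$, the uniform control $|U^{*,j}+B_k|/|U^{*,j}|\leq(2/\epsilon)(|X+B_k|/|X|)$ arising from the fact that the residue retains size $\geq\epsilon|X|/2$ throughout, and the inductive bound, gives the desired estimate with $C(\epsilon)=O_\epsilon(1)\cdot C(\epsilon/2)$.

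The principal obstacle is to ensure that the number $N(\epsilon)$ of peeling rounds depends only on $\epsilon$ and $k$ rather than on $|X|$, so that the constant $C(\epsilon)$ in the final bound has the stated form. Bare Petridis guarantees only non-empty subsets at each round and in the worst case could force as many as $|X|$ iterations. This is handled by a quantitative strengthening of Petridis -- or equivalently, a dyadic pigeonhole over the local doubling ratios $|A+B_k|/|A|$ for $A$ ranging over large subsets of the current residue -- guaranteeing that each round captures at least a $\delta(\epsilon)$-fraction of the residue, so that $N(\epsilon)\leq1/\delta(\epsilon)$ and $C(\epsilon)$ is bounded independently of $|X|$.
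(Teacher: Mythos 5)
Your inductive set-up and the Petridis peeling are fine as far as they go, but the proof has a genuine gap exactly where you flag it, and the patch you propose does not exist in the form you need. After peeling pieces $U^{*,j}$ from the residue you bound $|U^{*,j}+B_1+\cdots+B_{k-1}|$ by the crude, size-independent quantity $|X_1+B_1+\cdots+B_{k-1}|$, so your final estimate picks up a factor of $N$, the number of peeling rounds. Petridis's lemma only guarantees a non-empty minimizing subset, which can be a single element, so $N$ can be as large as $|X|$; the ``quantitative strengthening of Petridis'' you invoke (each round captures a $\delta(\epsilon)$-fraction of the residue, with the operator-type bound $|U^{*}+V+W|\leq C\,\tfrac{|U+V|}{|U|}|U^{*}+W|$ valid for all $W$) is not a standard result, and the ``dyadic pigeonhole over local doubling ratios'' is an assertion, not an argument. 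Since this is the crux of your proof, the argument as written is incomplete.

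The standard repair — and essentially the Katz--Shen proof that the paper is quoting — avoids controlling $N$ altogether by making the per-piece bound proportional to the size of the piece. One peels with the Pl\"unnecke--Ruzsa inequality in its subset form, applied to all $k$ sets at once: if $R_j$ is the current residue with $|R_j|\geq\epsilon|X|$, there is a non-empty $X_j\subseteq R_j$ with
$$|X_j+B_1+\cdots+B_k|\leq\frac{|R_j+B_1|\cdots|R_j+B_k|}{|R_j|^{k}}\,|X_j|\leq\frac{|X+B_1|\cdots|X+B_k|}{\epsilon^{k}|X|^{k}}\,|X_j|.$$
Removing $X_j$ and iterating until the residue drops below $\epsilon|X|$, and setting $X'=\bigcup_j X_j$, one sums the right-hand sides using $\sum_j|X_j|\leq|X|$ to get the stated bound with $C(\epsilon)=\epsilon^{-k}$, no matter how many rounds occur and with no induction on $k$. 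If you prefer to keep your inductive structure, you would need a bound on $|U^{*,j}+B_1+\cdots+B_{k-1}|$ that scales with $|U^{*,j}|$, which again forces you back to the subset form of Pl\"unnecke--Ruzsa rather than the crude bound you use.
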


We will need the following covering lemma, which appeared in sum-product estimates for the first time in Shen \cite{ShenSP}.

\begin{lemma}\label{theorem:covering}
Let $X$ and $Y$ be additive sets. Then for any $\epsilon\in{(0,1)}$ there is some constant $C(\epsilon)$, such that at least $(1-\epsilon)|X|$ of the elements of $X$ can be covered by $C(\epsilon)\frac{\min\{|X+Y|,|X-Y|\}}{|Y|}$ translates of $Y$.

\end{lemma}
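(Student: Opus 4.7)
My plan is to adapt the sum-product strategy of Rudnev \cite{mishaSP} and Li--Roche-Newton \cite{LiORN} to the ratio-set setting. I begin with a pigeonhole on $A:A$: some $d \in A:A$ satisfies $r_{A:A}(d) \geq |A|^2/|A:A|$. Setting $B := A \cap dA$, we have $|B| \geq |A|^2/|A:A|$, and crucially $B \subset A$ and $d^{-1}B \subset A$ both hold. This last property is what lets the ratio set enter the final estimate: any difference $b_1 - b_2$ of elements of $B$, once divided by $d$, lies in $A-A$, so later factors of the form $a-b$ with $a,b \in B$ can be rewritten as $d$ times an element of $A-A$.

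I then dichotomise on the size of $R(B)$. In the main case $|R(B)| \gg |B|^2$, Lemma \ref{lemma 21} produces $a,b,c,d' \in B$ such that for any $B' \subset B$ with $|B'| \approx |B|$,
$$|(a-b)B' + (c-d')B'| \gg |B|^2.$$
Using $d^{-1}B \subset A$, I factor out a dilation by $d$ and reduce the left-hand side to $|\alpha B' + \beta B'|$ with $\alpha,\beta \in A-A$. I bound this from above by combining (a) the covering Lemma \ref{theorem:covering}, which covers most of $A$ by $\ll |A+A|/|B|$ translates of $B$, so that $|\alpha A + \beta A|$ is controlled by $|\alpha B + \beta B|$ times a factor of $(|A+A|/|B|)^2$; and (b) the Pl\"unnecke--Ruzsa inequality (Lemma \ref{theorem:CPlun}) with pivot $A$ applied to sumsets of the form $\alpha A + \beta A$. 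The ratio set enters through the identification $d^{-1}B \subset A$, which converts some of the multiplicative shifts into factors of $|A:A|$ in the final estimate. Combining the Rudnev lower bound $|B|^2 \gg |A|^4/|A:A|^2$ with this upper bound, and choosing the Pl\"unnecke pivot in two natural ways, produces the two inequalities $|A+A|^7|A:A|^4 \gg |A|^{12}$ and $|A+A|^6|A:A|^5 \gg |A|^{12}$.

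The structured case $|R(B)| \ll |B|^2$ is handled separately via the subfield hypothesis. Standard arguments place a large fraction of $B$ in a coset $cG$ of a proper subfield $G$ with $|G|$ comparable to $|B|^2$, so that $|A \cap cG|$ is at least proportional to $|B|$. The relaxed condition $|A \cap cG| \leq \max\{|G|^{1/2}, |A|/8\}$ then bifurcates into two sub-cases, each yielding the desired bound on $|A:A|$ either directly (when $|G|^{1/2}$ dominates, and one compares $|B|^2 \approx |G|$ with $|B| \gg |A|^2/|A:A|$) or trivially (when $|A|/8$ dominates, which forces $|A:A| \gtrsim |A|$, far more than needed). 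The latter sub-case is absent from \cite{LiORN} and corresponds to the extra case referred to in the introductory footnote.

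The main technical obstacle is the bookkeeping in the main case: tracking precisely how the covering and Pl\"unnecke--Ruzsa steps interact to yield two distinct clean exponent tuples $(7,4)$ and $(6,5)$, rather than a single weaker bound. This is the place where the argument differs subtly from the sum-product version of \cite{LiORN}, and is also why no logarithmic factor appears in Theorem \ref{theorem:ratios}, in contrast to Theorem \ref{theorem:main2}.
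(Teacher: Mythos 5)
Your proposal does not prove the statement in question. The statement is Lemma \ref{theorem:covering}, the covering lemma for two abstract additive sets $X$ and $Y$; it is a standalone tool, quoted in the paper from Shen \cite{ShenSP}, and its proof has nothing to do with ratio sets, rich slopes, the set $R(B)$, subfields, or the exponent $12/11$. What you have written is instead an outline of the proof of Theorem \ref{theorem:ratios}. Worse, as a purported proof of Lemma \ref{theorem:covering} it is circular: step (a) of your main case explicitly invokes ``the covering Lemma \ref{theorem:covering}'' to pass from $|\alpha B'+\beta B'|$ to $|\alpha A+\beta A|$, i.e.\ you assume the very statement you were asked to establish.

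What is actually needed is a short greedy pigeonhole argument. Suppose the translates chosen so far leave a set $X'\subseteq X$ with $|X'|\geq\epsilon|X|$ uncovered. The at least $\epsilon|X||Y|$ pairs $(x,y)\in X'\times Y$ give differences $x-y$ lying in $X-Y$, so some $s\in X-Y$ has at least $\epsilon|X||Y|/|X-Y|$ representations $s=x-y$ with $x\in X'$; all such $x$ lie in the single translate $s+Y$, which therefore covers at least $\epsilon|X||Y|/|X-Y|$ previously uncovered elements. Hence after at most $\epsilon^{-1}|X-Y|/|Y|$ steps fewer than $\epsilon|X|$ elements of $X$ remain uncovered, proving the bound with $|X-Y|$ and $C(\epsilon)=\epsilon^{-1}$. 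An identical argument with sums $x+y\in X+Y$ handles the $|X+Y|$ term (the popular fibre is then a translate of $-Y$, a sign which is immaterial in the applications and is absorbed by the symmetric form of the statement), and taking whichever of the two quantities is smaller yields the stated $\min\{|X+Y|,|X-Y|\}$. None of the machinery in your sketch enters, and conversely your sketch of Theorem \ref{theorem:ratios}, even if fully fleshed out, would not yield Lemma \ref{theorem:covering}.
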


The main new tool that appeared in \cite{LiORN} was the following result:

\begin{lemma}\label{theorem:polynomial}
Let $B$ be a subset of $\mathbb{F}_q$ with at least two elements, and let
$\mathbb{F}_{B}$ denote the subfield generated by $B$. Then there exists
a polynomial of several variables with integer coefficients
$P(x_1,x_2,\ldots,x_{m})$ such that
$P(B,B,\ldots,B)=\mathbb{F}_{B}.$
\end{lemma}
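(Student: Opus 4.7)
The key reduction is to show that $\mathbb{F}_B$ equals the subring $\mathbb{Z}[B] := \{Q(b_1, \ldots, b_k) : Q \in \mathbb{Z}[y_1, \ldots, y_k]\}$ of $\mathbb{F}_q$ generated by $B = \{b_1, \ldots, b_k\}$. The inclusion $\mathbb{Z}[B] \subseteq \mathbb{F}_B$ is immediate since $\mathbb{F}_B$ is closed under the $\mathbb{Z}$-algebra operations. For the reverse, $\mathbb{Z}[B]$ is a subring of the finite field $\mathbb{F}_q$, hence a finite integral domain, hence itself a field; as it contains $B$, it must contain $\mathbb{F}_B$. Therefore every $f \in \mathbb{F}_B$ has a representation $f = Q_f(b_1, \ldots, b_k)$ with $Q_f \in \mathbb{Z}[y_1, \ldots, y_k]$; it remains to package all such expressions into a single polynomial.

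To do this I would apply multivariate Lagrange interpolation on the box $B^n \subseteq \mathbb{F}_q^n$. Enumerate $\mathbb{F}_B = \{f_1, \ldots, f_N\}$, take $n$ large enough that $|B|^n \geq N$ (possible since $|B| \geq 2$), and pick distinct tuples $\mathbf{c}_1, \ldots, \mathbf{c}_N \in B^n$. Using the single-variable Lagrange basis
\[
\ell_b(x) = \prod_{b' \in B \setminus \{b\}} \frac{x - b'}{b - b'}, \qquad b \in B,
\]
whose coefficients lie in $\mathbb{F}_B$, form the interpolant $P_0(x_1, \ldots, x_n) = \sum_{i=1}^N f_i \prod_{j=1}^n \ell_{c_{i,j}}(x_j)$. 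This satisfies $P_0(\mathbf{c}_i) = f_i$ and vanishes on $B^n \setminus \{\mathbf{c}_1, \ldots, \mathbf{c}_N\}$, so that $P_0(B^n) = \mathbb{F}_B$ (noting $0 \in \mathbb{F}_B$), and all coefficients of $P_0$ lie in $\mathbb{F}_B$.

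The last step is to lift $P_0$ to an integer-coefficient polynomial. Each coefficient of $P_0$ lies in $\mathbb{F}_B = \mathbb{Z}[B]$ and so equals some explicit integer polynomial in $b_1, \ldots, b_k$. Replacing each such occurrence by the same expression in fresh formal variables $y_1, \ldots, y_k$ yields a polynomial $P(x_1, \ldots, x_n, y_1, \ldots, y_k) \in \mathbb{Z}[x_1, \ldots, x_n, y_1, \ldots, y_k]$ with $P(\mathbf{x}, b_1, \ldots, b_k) = P_0(\mathbf{x})$. The containment $P(B, \ldots, B) \subseteq \mathbb{Z}[B] = \mathbb{F}_B$ is automatic from the first paragraph, while fixing $\mathbf{y} = (b_1, \ldots, b_k)$ and varying $\mathbf{x}$ over the $\mathbf{c}_i$ realises every $f_i$, giving $P(B, \ldots, B) = \mathbb{F}_B$. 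The step most needing care is this lifting of $\mathbb{F}_B$-valued coefficients back to integer polynomials in the $b_j$, which is precisely what the finite integral domain argument in the first paragraph enables.
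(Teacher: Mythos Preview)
The paper does not actually prove this lemma; it is quoted as a preliminary result from \cite{LiORN}, so there is no in-paper argument against which to compare your attempt.

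That said, your argument is correct. The decisive step is the observation that $\mathbb{Z}[B]$, being a finite subring of the field $\mathbb{F}_q$, is a finite integral domain and hence already a field, so $\mathbb{Z}[B]=\mathbb{F}_B$. This is exactly the right idea and reduces the task to a finite packaging problem. Your Lagrange construction then works as stated: the basis polynomials $\ell_b$ have coefficients in $\mathbb{F}_B$ because all the differences $b-b'$ and their inverses lie there; the interpolant $P_0$ hits every element of $\mathbb{F}_B$ on the chosen tuples and takes the value $0\in\mathbb{F}_B$ elsewhere on $B^n$; and the lifting step, replacing each $\mathbb{F}_B$-coefficient by an integer polynomial in fresh variables $y_1,\ldots,y_k$ specialised to $b_1,\ldots,b_k\in B$, yields an integer-coefficient polynomial in $m=n+k$ variables with $P(B,\ldots,B)=\mathbb{F}_B$. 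The containment $P(B,\ldots,B)\subseteq\mathbb{Z}[B]=\mathbb{F}_B$ is automatic, and the reverse inclusion comes from fixing $\mathbf{y}=(b_1,\ldots,b_k)$ and ranging $\mathbf{x}$ over the $\mathbf{c}_i$. Nothing is missing.
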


\section{Proof of Theorem \ref{theorem:ratios}}

At the outset, apply Lemma \ref{theorem:CPlun} to identify some subset $A'\subset{A}$, with cardinality $|A'|\approx{|A|}$, so that
\begin{equation}
|A'+A'+A'+A'|\ll{\frac{|A+A|^3}{|A|^2}}.
\label{lastbit}
\end{equation}

Since many more refinements of $A$ are needed throughout the proof, this first change is made without a change in notation. So, throughout the rest of the proof, when the set $A$ is referred to, we are really talking about the large subset $A'$. In the conclusion of three of the five cases that follow, the following inequality will be applied:

\begin{equation}
|A+A+A+A|\ll{\frac{|A+A|^3}{|A|^2}}.
\label{finally}
\end{equation}

Consider the point set $A\times{A}\subset{\mathbb{F}_q\times{\mathbb{F}_q}}$. The line through the origin with gradient $\xi$ is the set $\{(x,y)\in{\mathbb{F}_q}:y=\xi x\}$. Label this line $L_{\xi}$, and observe that

$$\sum_{\xi\in{A:A}}|L_{\xi}\cap{(A \times A)}|=\sum_{\xi\in{A:A}}r_{A:A}(\xi)=|A|^2.$$

By the pigeonhole principle, a positive proportion of points in this set are supported on popular lines through the origin - that is lines that contain, say, more than half the average number of points from $A\times{A}$. To be precise, define the set of ``rich" slopes to be the set

$$\Xi_{rich}:=\left\{\xi\in{A:A}:r_{A:A}(\xi)\geq{\frac{|A|^2}{2|A:A|}}\right\}.$$

$$|A|^2=\sum_{\xi\in{\Xi_{rich}}}r_{A:A}(\xi)+\sum_{\xi\notin{\Xi_{rich}}}r_{A:A}(\xi).$$

Since the second term on the RHS contributes at most $\frac{|A|^2}{2}$, it follows that

$$\sum_{\xi\in{\Xi_{rich}}}r_{A:A}(\xi)\geq{\frac{|A|^2}{2}}.$$

We define $P$ to be the set of all points from $A\times{A}$ lying on a line through the origin supporting at least $\frac{|A|^2}{2|A:A|}$ points from $A\times{A}$, i.e. the points with a ``rich" slope. We have established that $|P|\geq{\frac{|A|^2}{2}}$. By elementary pigeonholing, there exists some popular abscissa $x_*$, so that the set

$$A_{x_*}=\{y:(x_*,y)\in{P}\},$$ 

has cardinality $|A_{x_*}|\geq{\frac{|A|}{2}}$.

For any point $p=(x_0,y_0)\in{P}$, let $P_{y_0/{x_0}}$ be the projection of points in $P$ on the line through the origin supporting $(x_0,y_0)$, onto the $x$-axis. So,

$$P_{y_0/{x_0}}=\left\{x:\left(x,\frac{xy_0}{x_0}\right)\in{P}\right\}.$$

In particular, note that for all $y\in{A_{x_*}}$, $|P_{y/x_*}|\gg{\frac{|A|^2}{|A:A|}}$. Another important property is the fact that $\frac{y}{x_*}P_{y/x_*} \subseteq A$.

Next this process may be repeated. Consider the point set $A_{x_*}\times{A_{x_*}}$, which has cardinality at least $\frac{|A|^2}{4}$. Once again, we may refine this point set by deleting points on unpopular lines. To be precise, let $S$ denote the set
$$S:=\left\{(x,y)\in{A_{x_*}\times{A_{x_*}}}:r_{A_{x_*}:A_{x_*}}(y/x)\geq{\frac{|A|^2}{8|A:A|}}\right\}.$$
 
By the same argument that established that $|P|\geq{\frac{|A|^2}{2}}$, it follows that the point set $S$ has cardinality $|S|\geq{\frac{|A|^2}{8}}$. Again there is a popular abscissa, $x_0$, so that the set 

$$A_{x_0}=\{y:(x_0,y)\in{S}\},$$ 

has cardinality $|A_{x_0}|\gg{|A|}$. Since the sum-ratio problem, and the conditions of Theorem \ref{theorem:ratios} are invariant under dilation, we may assume without loss of generality that $x_0=1$.

For some element $y\in{A_{1}}=A_{x_0}$, we will be interested in the projection of points in $S$ on the line connecting the origin and the point $(1,y)$, down onto the $x$-axis. This set of values can be defined more precisely as the set

\begin{equation}
S_{y}=\{x:(x,xy)\in{S}\}.
\label{richslopedefn}
\end{equation}

Note, for any $y\in{A_{1}}$, that $S_{y},yS_y\subseteq{A_{x_*}}$, and that crucially,

\begin{equation}
|S_{y}|\gg{\frac{|A|^2}{|A:A|}}.
\label{richslopes}
\end{equation}

\subsection{Five Cases}

The proof is now divided into five cases corresponding to the nature of the set $R(A_1)$.

\textbf{Case 1} - $R(A_1)\neq{R(A_{x_*})}$:

Since $A_1\subseteq{A_{x_*}}$, it must be the case that $R(A_1)\subseteq{R(A_{x_*})}$. Therefore, the only possibility for this case is that this inclusion is proper. So, there must be some element $r\in{R(A_{x_*})}$ such that $r\notin{R(A_1)}$. Fix this $r=\frac{a-b}{c-d}$ and elements $a,b,c,d\in{A_{x_*}}$ representing it. Since $r\notin{R(A_1)}$, for any subset $A_1'$ of $A_1$, there exist only trivial solutions to

\begin{equation}
b_1+rb_2=b_3+rb_4,
\label{cseq2}
\end{equation}

such that $b_1,b_2,b_3,b_4\in{A_1'}$. The absence of non-trivial solutions to \eqref{cseq2} implies that 

$$|A_1'|^2=|A_1'+rA_1'|.$$

After expanding out the above expression and dilating the long sum set, it follows that

$$|A_1'|^2\ll{\left|\frac{c}{x_*}A_1'-\frac{d}{x_*}A_1'+\frac{a}{x_*}A_1'-\frac{b}{x_*}A_1'\right|}.$$

At least 90\% of $\frac{c}{x_*}A_1$ can be covered by at most

$$\frac{\left|\frac{c}{x_*}A_1+\frac{c}{x_*}P_{c/x_*}\right|}{|P_{c/x_*}|}\ll{\frac{|A+A||A:A|}{|A|^2}}$$

translates of $\frac{c}{x_*}P_{c/x_*}\subset{A}$. Similarly, each of $-\frac{d}{x_*}A_1$, $\frac{a}{x_*}A_1$ and $-\frac{b}{x_*}A_1$ can be 90\% covered by $\ll{\frac{|A+A||A:A|}{|A|^2}}$ translates of $A$.

By choosing an appropriate subset $A_1'$ of size $|A_1'|\approx{|A_1|}$, we can ensure that each of $\frac{c}{x_*}A_1'$, $-\frac{d}{x_*}A_1'$, $\frac{a}{x_*}A_1'$ and $-\frac{b}{x_*}A_1'$ get fully covered by these translates of $A$. Therefore, the covering lemma is applied four times in order to deduce that

\begin{equation}
|A|^2\ll{\frac{|A+A+A+A||A+A|^4|A:A|^4}{|A|^8}}.
\label{case1.1}
\end{equation}

After applying \eqref{lastbit}, it follows that

\begin{equation}
|A|^2\ll{\frac{|A+A|^7|A:A|^4}{|A|^{10}}},
\label{case1.2}
\end{equation}

as required.

From this point forward, we may assume that $R(A_1)=R(A_{x_*})$.

\textbf{Case 2} - $1+R(A_1)\nsubseteq{R(A_1)}$:

In this case, there exist elements $a,b,c,d\in{A_1}$ such that

$$r=1+\frac{a-b}{c-d}\notin{R(A_1)}=R(A_{x_*}).$$

Now, recall the set $S_a$ defined earlier. Let $S_a'$ be a subset of $S_a$ such that $|S_a'|\approx{|S_a|}$, and similarly let $A_1'$ be a positively proportioned subset of $A_1$. These two subsets will be specified later in order to apply the covering lemma effectively.

By Lemma \ref{theorem:CPlun} with $X=(c-d)A_1'$, a further subset $A_1''\subseteq{A_1'}$, with $|A_1''|\approx{|A_1'|}\approx{|A|}$, can be identified such that:

\begin{align}
|A_1''+rS_a'|&\leq{|(c-d)A_1''+(c-d)S_a'+(a-b)S_a'|}
\\&\ll{\frac{|A_1'+S_{a}'|}{|A_1'|}|(c-d)A_1'+(a-b)S_{a}'|}.
\label{plunnecker2}
\end{align}

Since $A_1''$ and $S_a'$ are subsets of $A_{x_*}$, there exist only trivial solutions to

$$a_1+ra_2=a_3+ra_4,$$

such that $a_1,a_3\in{A_1''}$ and $a_2,a_4\in{S_a'}$, otherwise $r\in{R(A_{x_*})}$, which is a contradiction. Therefore,

$$\frac{|A|^3}{|A:A|}\approx{|A_1''||S_a'|}=|A_1''+rS_a'|.$$

Combining this knowledge with \eqref{plunnecker2}, it follows that

\begin{equation}
\frac{|A|^4}{|A:A|}\ll{|A+A||cA_1'-dA_1'+aS_a'-bS_a'|}.
\label{messy3}
\end{equation}

At least 90\% of $cA_1$ can be covered by at most,

$$\frac{|cA_1+cS_c|}{|cS_c|}\ll{\frac{|A+A||A:A|}{|A|^2}}$$

translates of $cS_c\subset{A}$. Similarly, $-dA_1$ can be 90\% covered by $\ll{\frac{|A+A||A:A|}{|A|^2}}$ translates of $A$. The subset $A_1'$ can be chosen earlier in the proof in such a way as to ensure that both $cA_1'$ and $-dA_1'$ get fully covered by these translates of $A$. In much the same way, 90\% of $-bS_a$ can be covered by at most

$$K\frac{|-bS_a-bS_b|}{|bS_b|}\ll{\frac{|A+A||A:A|}{|A|^2}}$$

translates of $bS_b\subset{A}$. The subset $S_a'$ can be chosen earlier in the proof so that $-bS_a'$ gets fully covered by these translates of $A$. Working from \eqref{messy3} and applying the covering lemma three times, it follows that

$$|A|^{10}\ll{|A:A|^4|A+A|^4|A+A+aS_a'+A|}.$$

Finally, observe that $aS_a'$ is a subset of $A$, and thus there is no need to apply the covering lemma for this term. This gives

\begin{equation}
|A|^{10}\ll{|A:A|^4|A+A|^4|A+A+A+A|},
\label{case2.1}
\end{equation}

and finally, applying \eqref{finally}, we conclude that

\begin{equation}
|A+A|^7|A:A|^4\gg{|A|^{12}}.
\label{case2.2}
\end{equation}

\textbf{Case 3} - $A_1\nsubseteq{R(A_1)}$:

In this case, there exists some $a\in{A_1}$ such that $a\notin{R(A_1)}$. Then for any subset $A_1'$ of $A_1$, it follows that

$$|A_1'|^2={|A_1'+aA_1'|}.$$

By Lemma \ref{theorem:covering}, at least 90\% of $aA_1$ can be covered by at most

$$\frac{|aA_1+aS_a|}{|aS_a|}\ll{\frac{|A+A||A:A|}{|A|^2}}$$

translates of $aS_a\subset{A}$. $A_1'$ can then be chosen so that $|A_1'|\approx{|A|}$ and $aA_1'$ is covered entirely by these translates. Therefore, 

\begin{equation}|A|^4\ll{|A+A|^2|A:A|},
\label{case3}
\end{equation}

a result which is considerably stronger than the one we are seeking to prove.

\textbf{Case 4} - $A_1R(A_1)\nsubseteq{R(A_1)}:$

In this case, there must exist some $a,c,d,e,f\in{A_1}$ such that

$$r=a\frac{c-d}{e-f}\notin{R(A_1)}=R(A_{x_*}).$$

Let $Y_1$ be a subset of $A_{x_*}$, to be chosen later. Recall also that $S_a$ is a subset of $A_{x_*}$. Since $r\notin{R(A_{x_*})}$ there exist only trivial solution to

$$a_1+ra_2=a_3+ra_4,$$

such that $a_1,a_3\in{Y_1}$ and $a_2,a_4\in{S_a}$. Therefore,

$$|Y_1||S_a|=|Y_1+rS_a|.$$

Next apply Lemma \ref{theorem:SPlun} with $X=\frac{c-d}{e-f}Y_2$; the set $Y_2$ will be specified later. We obtain

\begin{align*}
|Y_2||Y_1||S_a|&=|Y_2||Y_1+rS_a|
\\&\leq{\left|Y_1+\frac{c-d}{e-f}Y_2\right||aS_a+Y_2|}
\\&\leq{\left|eY_1-fY_1+cY_2-dY_2\right||aS_a+Y_2|}.
\end{align*}

The sets $Y_1$ and $Y_2$ may be chosen to be subsets of $S_e$ and $S_c$ respectively. Then, since $eS_e,aS_a,cS_c\subset{A}$, it follows that

$$|Y_1||Y_2||S_a|\leq{|A-fY_1+A-dY_2||A+A|}.$$

Next, we need to apply the covering lemma twice. At least 90\% of $-fS_e$ can be covered by at most

$$\frac{|-fS_e-fS_f|}{|fS_f|}\ll{\frac{|A+A||A:A|}{|A|^2}}$$

translates of $fS_f\subseteq{A}$. The set $Y_1$ may be chosen so that $|Y_1|\approx{|S_e|}$ and $-fY_1$ is covered completely by these translates of $A$. In much the same way, $Y_2$ can be chosen so that $|Y_2|\approx{|S_c|}$ and $-dY_2$ is covered by $\ll{\frac{|A+A||A:A|}{|A|^2}}$ translates of $A$. It follows that

\begin{equation}
\frac{|A|^6}{|A:A|^3}\ll{\frac{|A+A+A+A||A+A|^3|A:A|^2}{|A|^4}}
\end{equation}

Rearranging this inequality yields

\begin{equation}
|A:A|^5|A+A|^3|A+A+A+A|\gg{|A|^{10}}.
\label{case4.1}
\end{equation}

Finally, applying \eqref{finally}, it follows that

\begin{equation}
|A:A|^5|A+A|^6\gg{|A|^{12}}.
\label{case4.2}
\end{equation}

\textbf{Case 5} 
Suppose Cases $1\sim 4$ don't happen. Then in
particular we have
\begin{align}\label{338}
A_1&\subseteq R(A_1);\\
1+R(A_1)&\subseteq R(A_1);\\
A_1R(A_1)&\subseteq R(A_1).
\end{align}
Since $|A_1R(A_1)|\geq|R(A_1)|$,
\begin{align*}
A_1R(A_1)=R(A_1).
\end{align*}
Noting that $R(A_1) \setminus \{0\}$ is closed under reciprocation, it follows that
\begin{align*}
\frac{R(A_1)}{A_1}=R(A_1).
\end{align*}
Given $a,x,y,z,w\in A_1$ with $z\neq w$,
\[a+\frac{x-y}{z-w}=a\cdot\left(1+\frac{1}{a}\cdot\frac{x-y}{z-w}\right)\in R(A_1).\]
This implies that
\begin{align*}
A_1+R(A_1)=R(A_1).
\end{align*}
Noting that $R(A_1)$ is additively symmetric
(that is, $R(A_1)=-R(A_1)$), we have
\begin{align*}
R(A_1)-A_1=R(A_1).
\end{align*}
We also note that
 \begin{align*}
A_1A_1+R(A_1)\subseteq A_1\left(A_1+\frac{R(A_1)}{A_1}\right)&=A_1(A_1+R(A_1))
\\&=A_1R(A_1)
\\&=R(A_1).
 \end{align*}
By induction, it is easy to show that
 \begin{align*}
 A_1^{(n)}+R(A_1)=R(A_1),
 \end{align*}
where $A_1^{(n)}$ is the $n$-fold product set of $A_1$. Consequently, for any polynomial of several variables with integer coefficients $P(x_1,x_2,\ldots,x_m)$,
 \begin{align*}
 P(A_1,A_1,\ldots,A_1)+R(A_1)=R(A_1).
 \end{align*}
 Applying Lemma \ref{theorem:polynomial},  we have
$\mathbb{F}_{A_1}+R(A_1)=R(A_1),$
where $\mathbb{F}_{A_1}$ is the subfield generated by
$A_1$. Since
\begin{align*}\mathbb{F}_{A_1}\subseteq
\mathbb{F}_{A_1}+R(A_1)=R(A_1)\subseteq \mathbb{F}_{A_1},
\end{align*}
we get $$R(A_1)=\mathbb{F}_{A_1}.$$ 

Hence, according to the conditions of Theorem \ref{theorem:ratios}, there are two possible cases.

Case 5.1: $|A\cap{R(A_1)}|<{|R(A_1)|^{1/2}}$.

Then,

\begin{align*}
|R(A_1)|&>{|R(A_1)\cap{A}|^2}
\\&\geq{|R(A_1)\cap{A_1}|^2}
\\&=|A_1|^2,
\end{align*}

where the latter equality is a consequence of the fact that we are not in case 3. By Lemma \ref{lemma 21}, there exist four elements $a,b,c,d\in{A_1}$, such that for any $A_1'\subset{A_1}$ with $|A_1'|\approx{|A_1|}$, 

$$|aA_1'-bA_1'+cA_1'-dA_1'|\gg{|A|^2}.$$

Applying the covering lemma, we see that 90\% of $aA_1$ can be covered by at most

$$\frac{|aA_1+aS_a|}{|aS_a|}\ll{\frac{|A+A||A:A|}{|A|^2}}$$

translates of $aS_a\subset{A}$. In much the same way, each of $-bA_1$, $cA_1$ and $-dA_1$ can be 90\% covered by $\ll{\frac{|A+A||A:A|}{|A|^2}}$ translates of $A$. The set $A_1'$ can be chosen so that $aA_1'$, $-bA_1'$, $cA_1'$ and $-dA_1'$ are fully covered by the translates of $A$. After applying the covering lemma four times, it follows that
\begin{equation}
|A|^2\ll{|A+A+A+A|\frac{|A+A|^4|A:A|^4}{|A|^8}}.
\label{case5.1}
\end{equation}

Applying \eqref{finally}, it follows that

\begin{equation}
|A+A|^7|A:A|^4\gg{|A|^{12}}
\label{case5.2}
\end{equation}

Case 5.2: $|A\cap{R(A_1)}|<{|A|/8}$.

Then,

\begin{align*}
\frac{|A|}{8}&>{|A\cap{R(A_1)}|}
\\&\geq{|A_1\cap{R(A_1)}|}
\\&={|A_1|}
\geq{\frac{|A|}{8}}.
\end{align*}

We obtain a contradiction here, and so this case cannot occur.

\begin{flushright}
\qedsymbol
\end{flushright}

\section{Estimates for iterated sum sets}

We conclude by pointing out that one can obtain slightly better exponents by considering longer sum sets:

\begin{theorem}\label{theorem:ratios2} 

Suppose that $A$ is a subset of $\mathbb{F}_q^*$ with the property that 

$$|A\cap{cG}|\leq{\max\left\{|G|^{1/2},\frac{|A|}{8}\right\}}$$ 

for any subfield $G$ of $\mathbb{F}_q$ and any element $c\in{\mathbb{F}_q}$. Then either

$$|A+A+A+A|^5|A:A|^4\gg{|A|^{10}},$$
or
$$|A+A+A+A|^4|A:A|^5\gg{|A|^{10}}.$$
In particular, it follows that
$$\max\{|A+A+A+A|,|A:A|\}\gg{|A|^{10/9}}.$$

\end{theorem}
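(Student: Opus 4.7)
The plan is to retrace the five-case argument in the proof of Theorem~\ref{theorem:ratios}, with two modifications: skip the initial Pl\"unnecke-based refinement that produces \eqref{lastbit}, and in each case terminate the argument one step earlier, just before the application of \eqref{finally} that trades an $|A+A+A+A|$ factor for a power of $|A+A|$. All of the structural work---pigeonholing rich slopes to produce $A_{x_*}$ and then $A_1$, defining the projections $S_y$, and splitting into five cases according to whether $R(A_1)$ is closed under the various operations---carries across verbatim, since none of it depended on the initial refinement; the subfield hypothesis is unchanged, so Case 5.2 still yields a contradiction.

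In each of the five cases I would read off the intermediate inequality that currently sits one line above the application of \eqref{finally}, namely \eqref{case1.1}, \eqref{case2.1}, \eqref{case3}, \eqref{case4.1}, and \eqref{case5.1} in Cases 1, 2, 3, 4, and 5.1 respectively. Each expresses a lower bound on a power of $|A|$ in terms of $|A+A|$, $|A+A+A+A|$, and $|A:A|$. To convert such a bound into one involving only $|A+A+A+A|$ and $|A:A|$, I would use the trivial injection $A+A \hookrightarrow A+A+A+A$ given by $x+y \mapsto x+y+a_0+b_0$ for any fixed $a_0,b_0 \in A$, which yields $|A+A| \le |A+A+A+A|$.

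Substituting this into the five intermediate inequalities, Cases 1, 2, and 5.1 each yield $|A+A+A+A|^5|A:A|^4 \gg |A|^{10}$; Case 4 yields $|A+A+A+A|^4|A:A|^5 \gg |A|^{10}$; Case 3 gives the strictly stronger bound $|A+A+A+A|^2|A:A| \gg |A|^{4}$, which (after raising to the $5/2$ power and using $|A:A| \ge 1$) implies the first alternative; and Case 5.2 is a contradiction as before. Setting $M:=\max\{|A+A+A+A|,|A:A|\}$ in whichever alternative holds gives $M^9 \gg |A|^{10}$, i.e.\ $M \gg |A|^{10/9}$, as required.

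I do not foresee any substantive obstacle: the improvement from exponent $12/11$ to $10/9$ is precisely the gain from declining to pay the lossy factor $|A+A|^3/|A|^2$ for an $|A+A+A+A|$ at the end of each case. The only minor care point is to verify that every covering-lemma step still runs without the initial refinement of $A$; this is immediate, because those steps depend only on $|A+A|$, $|A:A|$, $|A|$ and the containments $aS_a \subseteq A$, all of which are unaffected.
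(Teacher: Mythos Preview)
Your proposal is correct and matches the paper's own proof essentially line for line: the paper likewise instructs one to rerun the five-case argument without invoking the initial Pl\"unnecke refinement (i.e.\ without ever applying \eqref{finally}), to stop at the intermediate inequalities \eqref{case1.1}, \eqref{case2.1}, \eqref{case3}, \eqref{case4.1}, \eqref{case5.1}, and then to replace each $|A+A|$ by $|A+A+A+A|$ via the trivial bound. The handling of Case~3 as already strictly stronger and of Case~5.2 as vacuous is identical as well.
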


\begin{proof} Simply repeat the proof of Theorem without applying Lemma \ref{theorem:CPlun} in the conclusion of each of the cases. In particular we obtain \eqref{case1.1} in case 1, \eqref{case2.1} in case 2, \eqref{case4.1} in case 4 and \eqref{case5.1} in case 5. Then, apply the trivial bound $|A+A+A+A|\geq{|A+A|}$ for each of these cases. Also, note that the conclusion \eqref{case3} in case 3 is already much stronger than the result claimed here.

\end{proof}

\section*{Acknowledgements}

I am grateful to Tim Jones, Liangpan Li and Igor Shparlinski for helpful conversations, and to Misha Rudnev for explaining how the proof of this estimate works in the prime fields case.

\end{document}